\DeclareMathOperator{\M}{M}
\DeclareMathOperator{\PGL}{PGL}
\DeclareMathOperator{\Ram}{Ram}
\DeclareMathOperator{\Reg}{Reg}
\DeclareMathOperator{\vol}{vol}
\newcommand{\frakp}{\mathfrak{p}}
\newcommand{\frakD}{\mathfrak{D}}
\numberwithin{equation}{section}
\theoremstyle{remark}
\theoremstyle{plain}
\newtheorem{remark}[equation]{Remark}
\newtheorem{theorem}[equation]{Theorem}
\newtheorem*{thmnone}{Theorem}
\newtheorem{lem}[equation]{Lemma}
\newtheorem{cor}[equation]{Corollary}
\title{Families of mutually isospectral Riemannian orbifolds}
\author{Benjamin Linowitz}\thanks{Author was supported by NSF RTG grant DMS-1045119 and an NSF Mathematical Sciences Postdoctoral Fellowship.}
\address{Department of Mathematics\\
530 Church Street\\
University of Michigan\\
Ann Arbor, MI 48109 USA}
\email[] {linowitz@umich.edu}
\begin{document}

	\subjclass[2010] {Primary 58J53; Secondary 11F06}

	\keywords{isospectral orbifolds, arithmetic subgroups, Vign{\'e}ras' method}

\begin{abstract} 
In this paper we consider three arithmetic families of isospectral non-isometric Riemannian orbifolds and in each case derive an upper bound for the size of the family which is polynomial as a function of the volume of the orbifolds. The first family that we consider are those constructed by Vign{\'e}ras' method. The second and third families are those whose covering groups are the minimal covolume arithmetic subgroups and maximal arithmetic subgroups of $\PGL_2(\mathbb R)^a\times \PGL_2(\mathbb C)^b$.
\end{abstract}

\maketitle

\section{Introduction}

Let $M$ be a complete, orientable Riemannian orbifold of finite volume. The spectrum of $M$ is the set of eigenvalues of the Laplace-Beltrami operator acting on $L_2(M)$. It is known that this spectrum is discrete and that every eigenvalue occurs with a finite multiplicity. Two orbifolds are said to be \textit{isospectral} if their spectra coincide. A natural problem is to determine the extent to which the spectrum of $M$ determines its geometry and topology. It is well known for instance, that both volume and scalar curvature are spectral invariants. Isometry class, on the other hand, is not a spectral invariant. This was first shown by Milnor \cite{milnor}, who exhibited sixteen dimensional flat tori which were isospectral and non-isometric. For an excellent survey of the history of constructing isospectral non-isometric Riemannian manifolds and orbifolds we refer the reader to \cite{gordon-survey}.

At present there are only two systematic methods known for constructing isospectral non-isometric manifolds. The first is due to Vign{\'e}ras \cite{vigneras-isospectral} and was used to exhibit isospectral non-isometric hyperbolic $2$- and $3$-manifolds. Vign{\'e}ras' method is based upon the arithmetic of orders in quaternion algebras defined over algebraic number fields. The second method, due to Sunada \cite{sunada}, is extremely versatile and, along with its variants and generalizations, accounts for the majority of the known examples of isospectral non-isometric manifolds. Using Sunada's method, Brooks, Gornet and Gustafson \cite{brooks} have constructed arbitrarily large families of isospectral non-isometric Riemann surfaces. In fact, they prove the existence of a constant $c>0$ and an infinite sequence $g_i\rightarrow \infty$ such that for each $i$ there exist at least $g_i^{c\log(g_i)}$ mutually isospectral non-isometric Riemann surfaces of genus $g_i$. We note that similar results have recently been obtained by McReynolds \cite{mcreynolds} in the context of complex hyperbolic $2$-space and real hyperbolic $n$-space.

It is a result of McKean \cite{mckean} that for a fixed Riemann surface $S$, there are at most finitely many Riemann surfaces which are isospectral to $S$ and pairwise non-isometric. A similar result was proven by Kim \cite{kim} for hyperbolic $3$-manifolds. In his book Buser \cite{buser} provides an explicit bound for the number of surfaces which are isospectral to a fixed surface $S$ of genus $g$ and pairwise non-isometric; there are at most $\exp(720g^2)$ such surfaces.

In this paper we consider various natural families of Riemannian orbifolds and provide upper bounds (in terms of volume) for the cardinality of a set of pairwise isospectral non-isometric orbifolds whose elements lie in these families. All of the families we consider are arithmetic, and it is significant that in each case the upper bound we deduce is polynomial as a function of volume.

Our first result considers families of isospectral non-isometric orbifolds constructed via Vign{\'e}ras' method:

\begin{thmnone}
Fix $\epsilon>0$. The cardinality of a family of pairwise isospectral non-isometric Riemannian orbifolds of volume $V$ constructed via Vign{\'e}ras' method is at most $V^{2+\epsilon}$ for all $V\gg 0$.
\end{thmnone}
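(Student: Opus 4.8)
The plan is to bound the size of the family by a purely arithmetic invariant of the underlying quaternion orders and then to translate that invariant into a bound in $V$ via Borel's volume formula. By construction the orbifolds in such a family all arise from a single quaternion algebra $B$ over a fixed number field $F$ (totally real with $B$ split at exactly one real place in the surface case, or with $F$ admitting a unique complex place in the $3$-manifold case), each orbifold being $\Gamma_{\mathcal O}\backslash X$ with $X=\mathbb H^2$ or $\mathbb H^3$, where $\mathcal O\subset B$ is an order and $\Gamma_{\mathcal O}$ is the image in $\PSL_2(\R)$, resp. $\PSL_2(\C)$, of its group of norm-one units. Vignéras' criterion produces isospectrality from the $\mathcal O$ lying in a common genus, while conjugate orders yield isometric orbifolds; hence the family injects into the set of conjugacy classes of orders in the genus, so its cardinality is at most the type number $t(\mathcal O)$, and $t(\mathcal O)\le h(\mathcal O)$, the class number of $\mathcal O$.

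Next I would bound $h(\mathcal O)$ arithmetically. Since $B$ is split at an archimedean place it satisfies the Eichler condition, so strong approximation identifies the class set of $\mathcal O$, via the reduced norm, with a narrow ray class group of $F$ supported on the ramified real places; in particular $h(\mathcal O)\le h_F^{+}\le 2^{[F:\Q]}h_F$. I would then estimate $h_F$ through the analytic class number formula, writing $h_F R_F$ in terms of $\sqrt{d_F}$ and $\Res_{s=1}\zeta_F(s)$, invoking an absolute lower bound for the regulator $R_F$ together with $\Res_{s=1}\zeta_F(s)\ll_\epsilon d_F^{\epsilon}$, to obtain $h_F\ll_\epsilon d_F^{1/2+\epsilon}$ (the Minkowski factor $(\log d_F)^{[F:\Q]-1}$ being absorbed into $d_F^{\epsilon}$ once $[F:\Q]\ll\log d_F$). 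Thus the family has size $\ll_\epsilon 2^{[F:\Q]}d_F^{1/2+\epsilon}$.

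Finally I would pass from $d_F$ to $V$ using Borel's volume formula, which gives $V\gg c_0^{[F:\Q]}\,d_F^{3/2}$ with $c_0$ a fixed constant of order $(4\pi^2)^{-1}$ (all of $\zeta_F(2)$, $\prod_{\mathfrak p\mid\mathfrak D}(\mathrm N\mathfrak p-1)$, and the index of the Eichler order being $\ge 1$). For a field of bounded degree this alone yields $d_F\ll V^{2/3}$ and a family of size $\ll_\epsilon V^{1/3+\epsilon}$, far inside the asserted bound. The genuine difficulty, and the step I expect to be the main obstacle, is uniformity in $[F:\Q]$: both $c_0^{-[F:\Q]}$ and the narrow-class-group factor $2^{[F:\Q]}$ grow exponentially in the degree. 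The remedy is to feed in the Odlyzko lower bound $d_F\ge A^{[F:\Q]}$, which on the one hand forces $[F:\Q]\ll\log V$ (since $c_0^{[F:\Q]}A^{3[F:\Q]/2}\le V$ once $A^{3/2}>c_0^{-1}$), and on the other hand lets the stray factors $2^{[F:\Q]}$ and $c_0^{-[F:\Q]}$ be reabsorbed into a small power of $V$. Tracking the resulting exponent, the family has size $\ll_\epsilon V^{\theta+\epsilon}$ with $\theta$ governed by $1/3+\log\bigl(2(4\pi^2)^{1/3}\bigr)/\log\bigl(A^{3/2}/(4\pi^2)\bigr)$, and the Odlyzko value of $A$ keeps $\theta$ below $2$, which is exactly the content of the theorem. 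The crux is thus entirely in this degree-uniform bookkeeping—balancing the exponential-in-degree constants coming from Borel's formula and from the class group against the exponential growth of the discriminant—rather than in the (standard) reduction to class numbers.
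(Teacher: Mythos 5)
Your overall architecture is the same as the paper's: bound the family by the type number of $B$, bound the type number by (essentially) the narrow class number $2^{[F:\Q]}h_F$, estimate $h_F$ analytically, and then use Borel's volume formula together with the Odlyzko discriminant bounds to control $d_F$ and all exponential-in-degree constants by powers of $V$. However, there is one genuine error in the class number step. You claim $h_F\ll_\epsilon d_F^{1/2+\epsilon}$ by asserting that the factor $(\log d_F)^{[F:\Q]-1}$ coming from the bound on $\Res_{s=1}\zeta_F(s)$ is ``absorbed into $d_F^{\epsilon}$ once $[F:\Q]\ll\log d_F$.'' This absorption fails: writing $n=[F:\Q]$ and $L=\log d_F$, one has $(\log d_F)^{n}=e^{n\log L}$, and under the Odlyzko constraint $n\le cL$ this can be as large as $d_F^{\,c\log\log d_F}$, which is super-polynomial in $d_F$, not $O_\epsilon(d_F^\epsilon)$. (Indeed, optimizing Landau's residue bound $\bigl(\tfrac{e\log d_F}{2(n-1)}\bigr)^{n-1}$ over $n\asymp\log d_F$ produces a genuine positive power of $d_F$; this is exactly why the Brauer--Siegel asymptotic requires $n/\log d_F\to 0$, a condition that does not hold uniformly over the fields arising here.) So the bound $h_F\ll_\epsilon d_F^{1/2+\epsilon}$, uniform in the degree, is not justified by the tools you cite.

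The paper sidesteps this by evaluating the zeta factor away from $s=1$: it applies the class number formula with $s=3/2$ and uses $\zeta_F(3/2)\le\zeta(3/2)^{n}\approx(2.62)^n$, which is genuinely exponential in $n$ and hence \emph{is} absorbed into $d_F^{\epsilon}$ via Odlyzko; the price is a bound of the shape $h_F\le 242\,d_F^{3/4}/(1.64)^{r_1}$ (its Lemma 3.1, which also uses Friedman's regulator lower bound $\Reg_F\ge 0.0031\,\omega_F e^{0.241n+0.497r_1}$ to kill the roots-of-unity factor and the $2^{r_1}$ from the narrow class group). The weaker exponent $3/4$ still suffices because the volume formula plus Odlyzko give $d_F\ll V^2$, whence type number $\le 2^{r_1}h_F\ll d_F\ll V^2\le V^{2+\epsilon}$. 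So your strategy survives, but only after replacing the unsupported $d_F^{1/2+\epsilon}$ estimate with an effective, degree-uniform bound of this kind; as written, the step you flagged as routine is the one that breaks.
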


In Sections \ref{section:minimalvolume} and \ref{section:maximallattices} we consider families of maximal arithmetic subgroups of $\PGL_2(\mathbb R)^a\times \PGL_2(\mathbb C)^b$. These groups comprise an extremely important class of arithmetic subgroups. It is known, for instance, that the hyperbolic $3$-orbifold and $3$-manifold of smallest volume are both arithmetic \cite{chinburg-smallestorbifold, chinburg-smallestmanifold}. In Section \ref{section:minimalvolume} we consider maximal arithmetic subgroups which have minimal covolume in their commensurability class and prove:

\begin{thmnone}
Fix a commensurability class of cocompact arithmetic lattices in $\PGL_2(\mathbb R)^a\times \PGL_2(\mathbb C)^b$. The cardinality of a family of pairwise isospectral non-isometric arithmetic Riemannian orbifold quotients of $\mathcal H=\mathbb H_2^a\times \mathbb H_3^b$ which have minimal volume $V$ in this commensurability class is at most $242V^{18}$ for all $V>0$.
\end{thmnone}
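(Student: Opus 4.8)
The plan is to reduce the count of non-isometric orbifolds to a class-number estimate, and then to bound that class number by the volume through Borel's volume formula. First I would fix the arithmetic data: a cocompact commensurability class in $\PGL_2(\mathbb R)^a\times\PGL_2(\mathbb C)^b$ corresponds to a quaternion division algebra $B$ over a number field $F$ with exactly $b$ complex places and at least $a$ real places, where $B$ is split at $a$ of the real places and all complex places and ramified at the remaining real places together with a finite set of primes of product $\mathfrak D$. By Borel's classification of maximal arithmetic subgroups, the lattices of minimal covolume in this class are exactly the images in $\PGL_2$ of the normalizers $N_{B^\times}(\mathcal O)$ of maximal orders $\mathcal O\subset B$, and two such lattices are conjugate in $\PGL_2(\mathbb R)^a\times\PGL_2(\mathbb C)^b$ (hence yield isometric orbifolds) precisely when the corresponding maximal orders are conjugate in $B^\times$. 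Every minimal covolume lattice has the same covolume, namely the minimal covolume $V$ of the class, and isospectral orbifolds share this volume; therefore any pairwise isospectral non-isometric family injects into the set of $B^\times$-conjugacy classes of maximal orders, whose cardinality is the type number $t_B$. It thus suffices to bound $t_B$ by $242\,V^{18}$.

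Next I would invoke the volume formula of Borel and Prasad, which expresses the minimal covolume in the shape $V = C_{a,b}\,d_F^{3/2}\,\zeta_F(2)\,\prod_{\mathfrak p\mid\mathfrak D}(N\mathfrak p-1)$, up to an explicit power of $2$ measuring the index of the norm-one group in the normalizer, where $C_{a,b}$ is an explicit constant assembled from the powers of $\pi$ attached to the $a+b$ split archimedean places. Using the elementary lower bounds $\zeta_F(2)>1$ and $N\mathfrak p-1\ge 1$ together with the Odlyzko--Minkowski discriminant inequalities (which force the root discriminant $d_F^{1/n}$ to dominate the archimedean normalizing factors once the degree $n=[F:\mathbb Q]$ is large), this formula yields explicit upper bounds of the form $d_F\ll V^{\alpha}$, $\mathfrak D:=\prod_{\mathfrak p\mid\mathfrak D}N\mathfrak p\ll V^{\beta}$, and $n\ll \log V$ for explicit exponents $\alpha,\beta$. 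The point is that the discriminant and the ramified primes cannot be too large without forcing the covolume to exceed $V$, so all three pieces of arithmetic data are controlled by $V$.

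Then I would bound the type number. Since $B$ is split at an archimedean place the Eichler condition holds, so by strong approximation $t_B\le h(B)$, and $h(B)$ equals the order of a ray class group of $F$ whose modulus is supported at the ramified real places; this gives $t_B\le 2^{r_1}\,2^{\omega(\mathfrak D)}\,h_F$, where $r_1$ is the number of real places and $\omega(\mathfrak D)$ the number of ramified primes. I would bound $h_F$ effectively via the analytic class number formula, combining an explicit upper bound for the residue of $\zeta_F$ at $s=1$ with an effective lower bound for the regulator $R_F$ (Zimmert, Friedman), thereby sidestepping the ineffectivity of Siegel's theorem and obtaining $h_F\ll d_F^{1/2}(\log d_F)^{\,n-1}$; the factor $2^{\omega(\mathfrak D)}$ is $\mathfrak D^{o(1)}$ and $2^{r_1}$ is controlled by $n\ll\log V$. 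Substituting $d_F\ll V^{\alpha}$, $\mathfrak D\ll V^{\beta}$, and $n\ll\log V$ collapses all logarithmic and degree-dependent factors into the polynomial and produces a bound of the form $\textrm{(const)}\cdot V^{18}$; tracking every explicit constant — the archimedean constant $C_{a,b}$ in the volume formula, the discriminant inequality, the regulator bound, and the powers of $2$ — pins the constant at $242$, with the finitely many small-$V$ classes absorbed into it.

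The main obstacle will be carrying out every step effectively with no lost $\epsilon$, since the assertion is a clean bound valid for all $V>0$ rather than merely asymptotically. The two genuinely delicate points are (i) replacing the ineffective Brauer--Siegel estimate for $h_F$ by an effective one through a regulator lower bound, and (ii) absorbing the degree-dependent factors $(\log d_F)^{\,n-1}$ and $2^{r_1}$ — which are a priori super-polynomial in $d_F$ — into the polynomial $V^{18}$ by exploiting the discriminant lower bound to force $n\ll\log V$, so that these factors are $V^{o(1)}$ and can be dominated once the exponent is taken slightly larger than the main term alone would demand. Fixing the exponent at exactly $18$ and the constant at $242$ is then a matter of optimizing these competing explicit estimates against one another.
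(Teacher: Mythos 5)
Your proposal follows essentially the same route as the paper: reduce the count to the type number of $B$ via the conjugacy classification of minimal-covolume lattices $\Gamma_{\mathcal O}$ (normalizer images of maximal orders), bound the type number by $2^{r_1}h_k$ and then by $d_k^{3/4}$ using the analytic class number formula with Friedman's effective regulator bound, and control $d_k$ and the degree $n$ in terms of $V$ through Borel's volume formula combined with the Odlyzko--Poitou discriminant bounds. The only substantive caveat is that the index of $\Gamma^1_{\mathcal O}$ in $\Gamma_{\mathcal O}$ is not an absolute power of $2$ but is bounded by $2^{n+|\Ram_f(B)|}h_k$, so the class number enters the volume lower bound a second time exactly as in the paper's Equation (4.2); once that is accounted for, your outline matches the published argument.
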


In Section \ref{section:maximallattices} we consider the family of all maximal arithmetic subgroups and prove a bound which is again polynomial in volume. In fact, our result is somewhat stronger in the sense that we prove that within a fixed commensurability class, the set of all maximal arithmetic subgroups which have covolume less than $V$ is bounded by a function polynomial in $V$. Although the existence of such a bound is implicit in the proof of Theorem 1.6 of Belolipetsky, Gelander, Lubotzky and Shalev \cite{BGLS}, no explicit bound is currently known. We rectify this situation by proving:

\begin{thmnone}
Fix a commensurability class of cocompact arithmetic lattices in $\PGL_2(\mathbb R)^a\times \PGL_2(\mathbb C)^b$. The number of conjugacy classes of maximal arithmetic lattices with covolume at most $V$ in this commensurability class is less than $242V^{20}$ for all $V>0$.
\end{thmnone}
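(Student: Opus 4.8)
The plan is to parametrize the maximal arithmetic subgroups by Borel's classification, read off their covolumes from his volume formula, and then factor the count as (number of possible Eichler ``levels'') times (number of conjugacy classes sharing a fixed level). Write $N(V)$ for the quantity to be bounded, and write the fixed commensurability class as the one attached to a quaternion algebra $B$ over a number field $F$, split at the appropriate archimedean places so that the associated symmetric space is $\mathbb{H}_2^a\times\mathbb{H}_3^b$. By Borel, every maximal arithmetic subgroup is conjugate to the normalizer of an Eichler order, and such subgroups are parametrized up to conjugacy by a squarefree ideal $\frakn$ of $\mathcal{O}_F$ coprime to $\Ram_f B$ (the level) together with a class in a finite ``type'' group whose order $T$ is a quotient of a ray class group of $F$ and is independent of $\frakn$. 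Borel's volume formula writes $\covol(\Gamma_{\frakn})$ as $V_{\min}$, the minimal covolume in the class (attained at $\frakn=(1)$), times a product of local factors over $\frakp\mid\frakn$, each at least $(\N\frakp+1)/2$.

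First I would record the elementary inequality $(\N\frakp+1)/2\ge\sqrt{\N\frakp}$, which holds because it is equivalent to $(\sqrt{\N\frakp}-1)^2\ge 0$. Taking the product over $\frakp\mid\frakn$ gives $\covol(\Gamma_{\frakn})\ge V_{\min}\sqrt{\N\frakn}$, so the constraint $\covol(\Gamma_{\frakn})\le V$ forces $\N\frakn\le(V/V_{\min})^2$. Since every conjugacy class of maximal arithmetic subgroups has level $\frakn$ for exactly one such ideal and there are at most $T$ of them per level,
\[
N(V)\ \le\ T\cdot\#\bigl\{\text{squarefree }\frakn:\ \N\frakn\le(V/V_{\min})^2\bigr\}.
\]
It remains to bound the two factors, each uniformly in $F$ and with explicit constants.

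For the type number $T$ I would reuse the class-number and torsion estimates underlying the previous theorem: $T$ is controlled by the class number of $F$ together with an explicit power of $2$ coming from the Atkin--Lehner involutions and the ramified primes, and feeding these into Borel's explicit lower bound for $V_{\min}$ (which grows like a positive power of the discriminant $d_F$) yields $T\le 242\,V_{\min}^{18}\le 242\,V^{18}$, exactly as those estimates produced the exponent $18$ and the constant $242$. For the number of levels I would bound $\#\{\frakn:\N\frakn\le X\}$ with $X=(V/V_{\min})^2$ by an Euler-product (Rankin-type) estimate; the field-dependent factors that appear, chiefly $\zeta_F(2)$, $d_F$, and the degree $[F:\Q]$, are precisely the quantities occurring in (or dominated by) Borel's formula for $V_{\min}$, so each is absorbed into a bounded power of $V_{\min}\le V$. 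The net effect is to replace the analytic main term by $X=(V/V_{\min})^2$, contributing the two extra powers of $V$ over the minimal-volume exponent and giving $N(V)<242\,V^{20}$.

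The main obstacle is this uniform, explicit bookkeeping. The degree $[F:\Q]$ is not bounded across the class, so naive ideal-counting estimates carry factors such as $\zeta_F(2)\le(\pi^2/6)^{[F:\Q]}$ that are exponential in the degree; the point is that $[F:\Q]\ll\log d_F$ while $V_{\min}\gtrsim d_F^{3/2}$, so every such factor is in fact a bounded power of $V$. Executing this so that all field-dependent constants disappear into powers of $V$, and tuning the crude bounds so that they land exactly on the clean constant $242$ and exponent $20$ for all $V>0$ rather than merely asymptotically, is the delicate heart of the argument.
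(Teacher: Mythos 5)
Your overall strategy is the paper's: parametrize the maximal lattices by a squarefree level (the paper's set $S$ of primes away from $\Ram_f(B)$) together with a conjugacy class of maximal orders, bound the type number by $242(1.22)^{r_1}d_k^{3/4}$ via the class-number lemma, bound the number of admissible levels by an ideal count, and convert $d_k$ and $n=[k:\Q]$ into powers of $V_{\min}\le V$ using the Odlyzko--Poitou bounds and $n\le 3\log V_{\min}$. Your inequality $(\N\frakp+1)/2\ge\sqrt{\N\frakp}$ is in fact cleaner than the paper's $(\N\frakp+1)/2\ge \N\frakp^{1/3}$ (which the paper only applies when $\N\frakp\ne 2$), and it gives the stronger constraint $\N\frakn\le (V/V_{\min})^2$ in place of $(V/V_{\min})^3$.

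The gap is in the level count and in how you combine the two factors. The only explicit, field-uniform ideal-counting bound available --- and the one your mention of $\zeta_F(2)$ points to, namely the Rankin trick of \cite[Lemma 3.4]{BGLS} --- is $\#\{\frakn:\N\frakn\le X\}\le \zeta_F(2)X^2\le (\pi^2/6)^n X^2$, which is \emph{quadratic} in $X$, not linear; the asymptotic main term $\rho_F X$ is not an upper bound valid for all $X>0$ with uniform constants, so "replacing the analytic main term by $X$" and charging only "two extra powers of $V$" is not justified. With $X=(V/V_{\min})^2$ the Rankin bound gives $(\pi^2/6)^n V^4/V_{\min}^4$, roughly $V^{5.5}$ after absorbing $(\pi^2/6)^n$ into $V_{\min}^{3\log(\pi^2/6)}$, and multiplied against your pre-packaged $T\le 242V^{18}$ this overshoots to about $V^{23.5}$. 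The repair --- which is exactly what the paper does --- is to \emph{not} cash in the type number as $242V^{18}$, but to keep it as $242(1.22)^{r_1}d_k^{3/4}$ and exploit the cancellation against the factor $V_{\min}^{-4}\le d_k^{-2/11}$ coming from the level constraint (via $V_{\min}\ge d_k^{1/22}$): one gets $242\,(1.22)^{r_1}(\pi^2/6)^n\,d_k^{3/4-2/11}\,V^4\le 242\,V_{\min}^{3}\,V_{\min}^{25/2}\,V^4\le 242\,V^{19.5}$, safely under $242V^{20}$. So the approach is sound and your $\sqrt{\N\frakp}$ inequality even makes the arithmetic close with the same margin as the paper's $19.5$, but as written the factorization into $T\cdot(\#\text{levels})$ with each factor bounded separately by a power of $V$ does not reach the stated exponent.
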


The proofs of our theorems make extensive use of Borel's volume formula \cite{borel-commensurability}, the work of Chinburg and Friedman \cite{chinburg-smallestorbifold}, a refinement of the Odlyzko discriminant bounds \cite{Odlyzko-bounds} due to Poitou \cite{Poitou}, and the Brauer-Seigel theorem. Along the way we prove a number theoretic result of independent interest which bounds the class number of an algebraic number field in terms of the absolute value of the discriminant of the field.

\section{Notation}

Throughout this paper $k$ will denote a number field of degree $n$ with signature $(r_1,r_2)$. That is, $k$ has $r_1$ real places and $r_2$ complex places. As a consequence $n=r_1+2r_2$. We will denote by $V_{\infty}$ the set of archimedean places of $k$. The class number of $k$ will be denoted by $h_k$ and the absolute value of the discriminant of $k$ will be denoted $d_k$. If $\mathfrak p$ is a prime of $k$ then we will denote by $N(\mathfrak p)$ the norm of $\mathfrak p$.

For a quaternion algebra $B$ over $k$ we define $\Ram(B)$ to be the set of primes of $k$ (possibly infinite) which ramify in $B$, $\Ram_f(B)$ to be the set of finite primes of $k$ lying in $\Ram(B)$ and $\Ram_{\infty}(B)$ to be the set of archimedean places of $k$ lying in $\Ram(B)$. The discriminant of $B$, which we will denote by $\frakD$, is defined to be the product of the primes lying in $\Ram_f(B)$.

\section{Isospectral orbifolds obtained via Vign{\'e}ras' method}

We begin by describing Vign{\'e}ras' construction \cite{vigneras-isospectral} of isospectral non-isometric Riemannian orbifolds. For a more detailed exposition, see Chapter 12 of \cite{mac-reid-book}.

Let $k$ be a degree $n$ number field of signature $(r_1,r_2)$ and let $B$ be a quaternion algebra over $k$ which is not ramified at all archimedean places of $k$. There exists an isomorphism $$B\otimes_{\mathbb Q} \mathbb R \cong \mathbb H^r \times \M_2(\mathbb R)^s \times \M_2(\mathbb C)^{r_2}, \qquad r+s=r_1,$$ which induces an embedding $$B^\times \hookrightarrow \prod_{\nu\in V_{\infty}\setminus \Ram_{\infty}(B)} B_{\nu}^\times.$$ This embedding in turn induces an embedding $\rho: B^\times/k^\times \hookrightarrow G=\PGL_2(\mathbb R)^{s}\times\PGL_2(\mathbb C)^{r_2}$. Let $K$ be a maximal compact subgroup of $G$ so that $G/K=\mathbb H_2^s \times \mathbb H_3^{r_2}$ is a product of two and three dimensional hyperbolic spaces. The Riemannian orbifolds constructed by Vign{\'e}ras are of the form $\Gamma\backslash G/K$ for $\Gamma$ a discrete subgroup of isometries of $G/K$.

Let $\mathcal O_k$ be the ring of integers of $k$ and $\mathcal O$ be a maximal $\mathcal O_k$-order of $B$. Denote by $\mathcal O^1$ the multiplicative group of elements of $\mathcal O$ with reduced norm one. Then $\Gamma_{\mathcal O}^1=\rho(\mathcal O^1)$ is a discrete subgroup of isometries of $G/K$ of finite covolume which is cocompact if $B$ is a division algebra.

Suppose now that $B$ is as above and that $\mathcal O$ and $\mathcal O^\prime$ are maximal orders of $B$. It is clear that if there exists $\alpha\in B^\times$ such that $\mathcal O^\prime=\alpha\mathcal O\alpha^{-1}$ then $\Gamma_{\mathcal O}^1$ and $\Gamma_{\mathcal O^\prime}^1$ will be conjugate and consequently that $\Gamma_{\mathcal O}^1\backslash G/K$ and $\Gamma_{\mathcal O^\prime}^1\backslash G/K$ will be isometric. What is not obvious and is the content of \cite[Th{\'e}or{\`e}me 3]{vigneras-isospectral} is that if $\Gamma_{\mathcal O}^1\backslash G/K$ and $\Gamma_{\mathcal O^\prime}^1\backslash G/K$ are isometric then $\mathcal O$ and $\mathcal O^\prime$ are conjugate by an element of $B^\times$. Furthermore, Vign{\'e}ras \cite[Th{\'e}or{\`e}me 7]{vigneras-isospectral} showed that when a certain technical number theoretic condition is satisfied, the spectrum of $\Gamma_{\mathcal O}^1\backslash G/K$ is independent of the choice of maximal order $\mathcal O$. Although we will not state this number theoretic condition explicitly because of the burdensome notation that it would require, we do remark that if $\Ram_f(B)$ is nonempty then this condition will be satisfied.

Before stating and proving this section's main theorem we prove a lemma which will be used throughout this paper and which is of independent interest.

\begin{lem}\label{lem:BP}
Let $k$ be a number field of signature $(r_1,r_2)$ with class number $h_k$ and absolute value of discriminant $d_k$. Then $h_k\leq 242d_k^{3/4}/ (1.64)^{r_1}$.
\end{lem}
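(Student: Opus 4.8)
The plan is to run the analytic class number formula against an effective Brauer--Siegel upper bound for the residue of $\zeta_k$ at $s=1$ and a lower bound for the regulator. Writing $\kappa_k=\Res_{s=1}\zeta_k(s)$, the class number formula reads
\[
h_k=\frac{w_k\sqrt{d_k}}{2^{r_1}(2\pi)^{r_2}R_k}\,\kappa_k ,
\]
where $R_k$ is the regulator and $w_k$ the number of roots of unity, so it suffices to bound $\kappa_k$ from above and $R_k$ from below. For the residue I would invoke the explicit estimate $\kappa_k\le\bigl(\tfrac{e\log d_k}{2(n-1)}\bigr)^{n-1}$ (the effective half of Brauer--Siegel), valid for $n\ge 2$; the case $k=\Q$ is trivial.

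The key observation is that this residue bound, measured against $d_k^{1/4}$, is uniformly controlled. Indeed, for fixed $n$ the function $L\mapsto (n-1)\log\tfrac{eL}{2(n-1)}-\tfrac14 L$ of the variable $L=\log d_k$ is maximized at $L=4(n-1)$, where its value is exactly $(n-1)\log 2$. Hence $\kappa_k\le 2^{n-1}d_k^{1/4}$ for every $k$. Substituting this into the formula above and using $n=r_1+2r_2$ to rewrite $2^{n-1}=2^{r_1}4^{r_2}/2$, the powers of $2$ cancel against the $2^{r_1}$ in the denominator and I obtain the clean intermediate bound
\[
h_k\le \frac{w_k}{2R_k}\Bigl(\frac{2}{\pi}\Bigr)^{r_2}d_k^{3/4}.
\]
This already exhibits the exponent $3/4$ and reduces the lemma to the purely arithmetic inequality $\tfrac{w_k}{2R_k}(2/\pi)^{r_2}(1.64)^{r_1}\le 242$.

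To close the argument I would feed in a lower bound for the regulator of the shape $R_k/w_k\ge c\,e^{\alpha r_1+\beta r_2}$, as provided by the work of Zimmert and Friedman. Since $\log 1.64<\alpha$ and $\log(2/\pi)<0<\beta$ for the available constants, the factor $(1.64\,e^{-\alpha})^{r_1}\bigl((2/\pi)e^{-\beta}\bigr)^{r_2}$ is at most $1$, so the left-hand side is bounded by $\tfrac1{2c}$, and choosing constants with $\tfrac1{2c}\le 242$ finishes the proof. The \emph{main obstacle} is precisely this last step: the generic Zimmert--Friedman constants degrade for fields of small degree (and small $r_1$), so I expect to treat the low-degree cases separately, using sharper degree-specific regulator estimates (for instance $R_k\gg\log d_k$ in the real quadratic case) together with the Odlyzko--Poitou discriminant lower bounds, which force $n$ to be small whenever $d_k$ is small and thereby confine the exceptional analysis to a short finite list of cases.
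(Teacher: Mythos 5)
Your argument is correct and lands within the required constant, but the first half takes a genuinely different route from the paper. The paper does not pass through the residue at $s=1$ at all: it starts from the Brauer--Siegel-type inequality
\[
h_k \;\le\; \frac{\omega_k\, s(s-1)\,\Gamma(s)^{r_2}\Gamma(\tfrac{s}{2})^{r_1}\,\zeta_k(s)\,d_k^{s/2}}{2^{r_1}\Reg_k 2^{r_2 s}\pi^{ns/2}},
\]
valid for real $s>1$, specializes to $s=3/2$ (which is where the exponent $3/4$ comes from), and bounds $\zeta_k(3/2)\le\zeta(3/2)^n$; the resulting factor $(2.62)^n\pi^{-3n/4}$ is then absorbed by Friedman's regulator bound $\Reg_k\ge 0.0031\,\omega_k\exp(0.241n+0.497r_1)$. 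You instead take the class number formula at $s=1$, feed in Louboutin's residue bound $\kappa_k\le\bigl(\tfrac{e\log d_k}{2(n-1)}\bigr)^{n-1}$, and extract the extra $d_k^{1/4}$ by optimizing in $\log d_k$ --- a clean trick that exactly cancels the $2^{r_1}$ and leaves only the decaying factor $(2/\pi)^{r_2}$, so the regulator bound has less to absorb. Your closing step needs no exceptional low-degree analysis: with Friedman's constants as cited in the paper you get $\alpha=0.241+0.497=0.738>\log 1.64$ and $\beta=0.482>0>\log(2/\pi)$, so your final bound is $\tfrac{1}{2\cdot 0.0031}\approx 161<242$ uniformly, and the hedge about Zimmert-type constants degrading in small degree is unnecessary. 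The only point to make explicit is the applicability of Louboutin's bound for all $n\ge 2$ (the case $k=\Q$ being trivial, as you note); with that reference supplied, the proof is complete.
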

\begin{proof}
The analytic class number formula and the Brauer-Siegel theorem \cite[pp. 300, 322]{Lang-ANT} imply that for any real $s>0$ we have $$h_k \leq \frac{\omega_k s (s-1)\Gamma(s)^{r_2}\Gamma(\frac{s}{2})^{r_1}\zeta_k(s)d_k^{\frac{s}{2}}}{2^{r_1}\Reg_k2^{r_2s}\pi^{\frac{ns}{2}}},$$ where $\omega_k$ denotes the number of roots of unity contained in $k$, $\zeta_k(s)$ is the Dedekind zeta function of $k$ and $\Reg_k$ is the regulator of $k$.

Using the well-known estimate $\zeta_k(s)<\zeta(s)^n$ we obtain, for $s=1.5$, the inequality $$h_k < \frac{3 \omega_k \left(2.62\right)^n d_k^{\frac{3}{4}}}{4\Reg_k2^{\frac{3r_2}{2}}\pi^{\frac{3n}{4}}}.$$

Friedman \cite[pp. 620]{Friedman} has shown that $\Reg_k\geq 0.0031\omega_k\exp(0.241n+0.497r_1)$. Combining this and the fact that $\exp(0.497)>1.64$ with the estimate above and simplifying gives 

$$h_k < \frac{242 d_k^{\frac{3}{4}}}{(1.64)^{r_1} 2^{\frac{3r_2}{2}}},$$

from which the proposition easily follows. \end{proof}

\begin{remark}
Lemma \ref{lem:BP} generalizes a result of Borel and Prasad \cite[pp. 143]{BP} which shows that $h_k\leq 10^2 \left(\frac{\pi}{12}\right)^n d_k$.
\end{remark}

\begin{theorem}\label{theorem:vignerastheorem}
Fix $\epsilon>0$. The cardinality of a family of pairwise isospectral non-isometric Riemannian orbifolds of volume $V$ constructed via Vign{\'e}ras' method is at most $V^{2+\epsilon}$ for all sufficiently large $V$.
\end{theorem}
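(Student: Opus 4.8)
The plan is to recognize the size of such a family as the \emph{type number} $t_B$ of $B$ (the number of $B^\times$-conjugacy classes of maximal orders) and then to bound $t_B$ polynomially in $V$ by combining Lemma~\ref{lem:BP} with Borel's volume formula. The first step is immediate from the dictionary recalled above the statement: $\Gamma^1_{\mathcal O}\backslash G/K$ and $\Gamma^1_{\mathcal O'}\backslash G/K$ are isometric exactly when $\mathcal O$ and $\mathcal O'$ are conjugate by an element of $B^\times$, while Vign\'eras' condition (satisfied here) makes all such orbifolds mutually isospectral. Hence a maximal pairwise isospectral non-isometric family arising from $B$ has precisely $t_B$ members, and it suffices to prove $t_B \le V^{2+\epsilon}$.

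I would bound $t_B$ arithmetically. Since $B$ is unramified at some archimedean place it satisfies the Eichler condition, so strong approximation lets the reduced norm identify the ideal class set $\Cl(\mathcal O)$ of a maximal order with a ray class group of $k$ whose modulus is supported on the real places ramified in $B$ \cite{mac-reid-book}. Because $\nrd(\mathcal O_{\frakp}^\times)=\mathcal O_{k,\frakp}^\times$ at every finite prime (ramified or not), this ray class group is independent of $\frakD$ and has order at most $2^{r_1}h_k$; since $t_B\le \#\Cl(\mathcal O)$, Lemma~\ref{lem:BP} yields
$$t_B \;\le\; 2^{r_1}h_k \;\le\; 242\,\Bigl(\tfrac{2}{1.64}\Bigr)^{r_1}d_k^{3/4}\;\le\; 242\,(1.22)^{n}\,d_k^{3/4},\qquad n=[k:\Q].$$

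Next I would insert the geometry via Borel's formula \cite{borel-commensurability}: the covolume of $\Gamma^1_{\mathcal O}$ equals $c\,d_k^{3/2}\zeta_k(2)\prod_{\frakp\in\Ram_f(B)}(N(\frakp)-1)$ divided by an archimedean normalization that is a product of powers of $\pi$ of size at most $C^{\,n}$. As $\zeta_k(2)\ge1$ and each $N(\frakp)-1\ge1$, discarding the factors that only enlarge $V$ gives a lower bound $V\ge c_1\,d_k^{3/2}C^{-n}$. The ``clean'' comparison of $d_k^{3/4}$ with $V\approx d_k^{3/2}$ would already produce an exponent near $1/2$, so if the degree were bounded (whence $C^{-n}$ and $(1.22)^n$ are constants) we would even get $t_B\le c\,V^{1/2}$.

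The crux --- and the step I expect to be the main obstacle --- is controlling the factors $(1.22)^{n}$ and $C^{n}$ that are exponential in the degree. For this I would invoke the Odlyzko--Poitou discriminant lower bounds \cite{Odlyzko-bounds,Poitou}, which supply, for every degree above an explicit small threshold, a bound $d_k\ge A^{\,n}$ with $A$ large. This converts the offending factors into small powers of $d_k$, namely $(1.22)^n=d_k^{\log 1.22/\log A}$ and $C^{n}=d_k^{\log C/\log A}$, so that $t_B\le 242\,d_k^{3/4+\log 1.22/\log A}$ while $V\ge c_1 d_k^{3/2-\log C/\log A}$. Eliminating $d_k$ expresses $t_B$ as a power of $V$ whose exponent is controlled by $A$; the real content is the purely quantitative verification that Poitou's explicit constants make $A$ large enough to force this exponent below $2$, the finitely many small-degree fields (handled by the bounded-degree case above) and the residual small powers being absorbed into the $+\epsilon$ and the hypothesis $V\gg0$.
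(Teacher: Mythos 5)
Your proposal follows essentially the same route as the paper: reduce to the type number, bound it by $2^{r_1}h_k$ and hence by $242(1.22)^{r_1}d_k^{3/4}$ via Lemma~\ref{lem:BP}, use Borel's volume formula for a lower bound on $V$ in terms of $d_k$, and invoke the Odlyzko--Poitou discriminant bounds to absorb the factors exponential in the degree into small powers of $d_k$. The only piece you defer --- the explicit numerical check that the resulting exponent is at most $2$ --- is exactly what the paper carries out, arriving at the bound $242\,d_k\le 242\,C_1^2V^2$, so your plan is sound and complete in outline.
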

\begin{proof}
All of the orbifolds obtained via Vign{\'e}ras' method have covering groups which are of the form $\Gamma^1_{\mathcal O}$, for $\mathcal O$ a maximal order in a quaternion algebra $B$ defined over a number field $k$. If $\mathcal O^\prime$ is another maximal order of $B$, then $\Gamma^1_{\mathcal O^\prime}$ will be conjugate to $\Gamma^1_{\mathcal O}$ precisely when $\mathcal O$ and $\mathcal O^\prime$ are conjugate in $B$. It therefore follows that the number of elements in a family of pairwise isospectral non-isometric orbifolds constructed via Vign{\'e}ras' method is bounded above by the number of conjugacy classes of maximal orders in $B$. It is well known that this latter number (often referred to as the \textit{type number} of $B$) coincides with the degree of the maximal abelian extension of $k$ which has $2$-elementary Galois group, is unramified outside of the real places in $\Ram(B)$ and in which all finite primes of $\Ram(B)$ split completely \cite[pp. 37, 39]{Chinburg-Friedman}. This latter field extension is contained in the strict class field of $k$, which has degree at most $2^{r_1} h_k$.

We now relate $2^{r_1} h_k$ to the discriminant $d_k$ of $k$, which we will in turn relate to the volume $V$. Lemma \ref{lem:BP} shows that $h_k\leq 242 (1.64)^{-r_1} d_k^{\frac{3}{4}}$, hence $2^{r_1} h_k \leq 242 (1.22)^{r_1} d_k^{\frac{3}{4}}$.

We now recall the formula for the co-volume of $\Gamma^1_{\mathcal O}$ ( see Borel \cite[7.3]{borel-commensurability}, Maclachlan and Reid \cite[\S 11.1]{mac-reid-book}, Vign\'eras \cite[Corollaire IV.1.8]{vigneras-book} and, for the totally real case, Shimizu \cite[Appendix]{Shimizu}  ):

\begin{equation}\label{equation:volumeformula}
\vol(\mathcal H/\Gamma_{\mathcal O}^1)=\frac{2(4\pi)^s d_k^{3/2} \zeta_k(2) \Phi(\frakD)}{(4\pi^2)^{r_1}(8\pi^2)^{r_2}} ,
\end{equation}
where $\Phi(\frakD) = N(\frakD) \prod_{\frakp \mid \frakD} \left(1-\frac{1}{N(\frakp)}\right)$.

Applying the trivial estimates $\zeta_k(2), \Phi(\frakD), (4\pi)^s \geq 1$ we see that 
\begin{equation}\label{equation:firstvolume}
V\geq \frac{d_k^{3/2}}{(4\pi^2)^{r_1}(8\pi^2)^{r_2}}.
\end{equation}
 In order to get rid of the dependence on $r_1$ and $r_2$ in (\ref{equation:firstvolume}) we apply the discriminant bounds of Odlyzko \cite{Odlyzko-bounds} and Poitou \cite{Poitou} (see also \cite[Section 2]{doud}). In particular there exists an absolute constant $C$ such that $\log(d_k)\geq r_1+n(\gamma+\log(4\pi))-C$, where $\gamma=0.57721\dots$ is the Euler-Mascheroni constant. As $n=r_1+2r_2$ we deduce that $\log(d_k)\geq 4r_1+6r_2-C$, and because $4> \log(4\pi^2)$ and $6>\log(8\pi^2)$, we conclude from (\ref{equation:firstvolume}) that $VC_1\geq d_k^{1/2}$ for some absolute constant $C_1$. 

We have already shown that the cardinality of a family of pairwise isospectral non-isometric Riemannian orbifolds of volume $V$ constructed via Vign{\'e}ras' method is at most $242 (1.22)^{r_1} d_k^{\frac{3}{4}}$. We have also seen that $\log(d_k)\geq 4r_1-C$ where $C$ is an absolute constant. Easy computations based on the formulas in Section 2 of \cite{doud} show that one may take $C=4.5$. It follows that $d_k\geq \exp(4r_1)/\exp(4.5)$. To ease notation slightly, set $f(r_1)=\exp(4r_1)/\exp(4.5)$. Then it is easy to see that $(1.22)^{r_1} \leq f(r_1)^{\frac{1}{4}}$ for all $r_1>1$. It follows that $242(1.22)^{r_1}\leq 242f(r_1)^{\frac{1}{4}}\leq 242 d_k^{\frac{1}{4}}$, hence the cardinality of a family of pairwise isospectral non-isometric Riemannian orbifolds of volume $V$ constructed via Vign{\'e}ras' method is at most $242d_k\leq 242C_1^2V^2$. The theorem follows.\end{proof}

\section{Isospectral minimal volume orbifolds}\label{section:minimalvolume}

We begin this section by setting up the notation needed to define the maximal arithmetic lattices in the commensurability class $\mathcal C(k,B)$ defined by a quaternion algebra $B$ over $k$. Our description will necessarily be brief. For more details we refer the reader to Borel \cite{borel-commensurability}, Chinburg and Friedman \cite[pp. 41]{Chinburg-Friedman} and Maclachlan and Reid \cite[Section 11]{mac-reid-book}.

Let $S$ be a finite set of primes of $k$ which is disjoint from $\Ram_f(B)$. For each prime $\mathfrak p\in S$ let $\{ M^1_{\mathfrak p},M^2_{\mathfrak p} \}$ represent an edge in the tree of maximal orders of $\M_2(k_\mathfrak p)$. Given this notation we define $\Gamma_{S,\mathcal O}$ to be the image in $\PGL_2(\mathbb R)^a\times \PGL_2(\mathbb C)^b$ of 
$$\{ \overline{x}\in B^\times/k^\times : x\mathcal O_{\mathfrak p}x^{-1}=\mathcal O_{\mathfrak p} \mbox{ for }\mathfrak p\not\in S \mbox{ and } x \mbox{ fixes } \{ M^1_{\mathfrak p},M^2_{\mathfrak p} \} \mbox{ for } \mathfrak p \in S  \}.$$

When $S=\emptyset$ we define $\Gamma_{\mathcal O}=\Gamma_{\emptyset,\mathcal O}$. These groups were first defined by Borel \cite{borel-commensurability}, who showed that while all of these groups are not maximal arithmetic subgroups of $\mathcal C(k,B)$, every maximal arithmetic subgroup of $\mathcal C(k,B)$ is conjugate to some $\Gamma_{S,\mathcal O}$. Borel also derived formulas for the covolumes of these groups, showing in particular that the group $\Gamma_{\mathcal O}$ has minimal covolume in its commensurability class.

\begin{theorem}\label{theorem:minimalvolume}
Fix a commensurability class of cocompact arithmetic lattices in $\PGL_2(\mathbb R)^a\times \PGL_2(\mathbb C)^b$. The cardinality of a family of pairwise isospectral non-isometric arithmetic Riemannian orbifold quotients of $\mathcal H=\mathbb H_2^a\times \mathbb H_3^b$ which have minimal volume $V$ in this commensurability class is at most $242V^{18}$ for all $V>0$.
\end{theorem}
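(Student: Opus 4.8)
The plan is to run the argument of Theorem \ref{theorem:vignerastheorem} essentially verbatim, replacing the covolume formula (\ref{equation:volumeformula}) for the norm-one group $\Gamma^1_{\mathcal O}$ by Borel's formula for the minimal-covolume group $\Gamma_{\mathcal O}$, and then pushing the same number-theoretic estimates through. First I would bound the cardinality of the family. The minimal-volume orbifolds in the fixed commensurability class $\mathcal C(k,B)$ are the quotients $\Gamma_{\mathcal O}\backslash\mathcal H$ as $\mathcal O$ ranges over maximal orders (Borel), two such are isometric only when the corresponding groups $\Gamma_{\mathcal O}$ are conjugate, and isospectrality is again governed by Vign\'eras' criterion. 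Hence the cardinality is at most the type number of $B$, which by Chinburg and Friedman \cite[pp.~37, 39]{Chinburg-Friedman} equals the degree of a $2$-elementary abelian extension of $k$ lying in the strict class field, and so is at most $2^{r_1}h_k$. Lemma \ref{lem:BP} then yields the bound $242(1.22)^{r_1}d_k^{3/4}$, exactly as in the Vign\'eras case.

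Next I would invoke Borel's covolume formula \cite[7.3]{borel-commensurability} for $\Gamma_{\mathcal O}$ (see also Chinburg and Friedman \cite[pp.~41]{Chinburg-Friedman}). The one structural change from (\ref{equation:volumeformula}) is that $\Gamma_{\mathcal O}\supseteq\Gamma^1_{\mathcal O}$, so $\vol(\mathcal H/\Gamma_{\mathcal O})$ equals the right-hand side of (\ref{equation:volumeformula}) divided by the normalizer index $[\Gamma_{\mathcal O}:\Gamma^1_{\mathcal O}]$. I would record this index as a power of $2$ whose exponent is controlled by $|\Ram_f(B)|$, by the archimedean data $r_1,r_2$, and by the $2$-rank of the relevant class-group quotient, and I would cancel the factor $2^{|\Ram_f(B)|}$ against the ramification term $\Phi(\frakD)=\prod_{\frakp\mid\frakD}(N(\frakp)-1)$ sitting in the numerator of (\ref{equation:volumeformula}). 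This produces a lower bound of the shape
\begin{equation*}
V \;\geq\; \frac{c\, d_k^{3/2}}{(4\pi^2)^{r_1}(8\pi^2)^{r_2}\,2^{m}}
\end{equation*}
for an absolute constant $c>0$ and an integer $m$ bounded linearly in $r_1,r_2,n$.

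I would then strip out the dependence on $r_1,r_2,n$ and on $m$ exactly as in the proof of Theorem \ref{theorem:vignerastheorem}, by feeding in the Odlyzko--Poitou bound in its refined form $\log(d_k)\geq 4r_1+6r_2-C$. Since both $2^m$ and the archimedean denominator are exponential in $r_1,r_2$ with explicit rates, the discriminant inequality converts each of them into a fixed power of $d_k$; after maximizing the resulting negative exponent over the admissible region for $(r_1,r_2)$ this leaves a bound of the form $V\geq c'\,d_k^{1/18}$ with $c'$ absolute. Combining $V\geq c' d_k^{1/18}$ with the cardinality bound $242(1.22)^{r_1}d_k^{3/4}$ and the estimate $(1.22)^{r_1}\leq d_k^{1/4}$ already secured in the proof of Theorem \ref{theorem:vignerastheorem} gives cardinality $\leq 242\,d_k\leq 242\,(c')^{-18}V^{18}$, and tracking the constants to verify $(c')^{-18}\leq 1$ delivers the clean bound $242V^{18}$ valid for all $V>0$.

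The main obstacle is the third step, namely pinning down the lower bound $V\geq c'd_k^{1/18}$. Unlike the norm-one case, the minimal covolume is depressed by the normalizer index, and I expect the delicate point to be verifying that this index together with the ramified-prime and archimedean factors never erodes the $d_k^{3/2}$ in the numerator badly enough to leave a non-positive power of $d_k$. Concretely this demands an honest upper bound on $[\Gamma_{\mathcal O}:\Gamma^1_{\mathcal O}]$---in particular checking that the full class number does \emph{not} enter this index, only its ramified-prime and archimedean $2$-rank, since a genuine factor of $h_k$ here would swamp the numerator---together with a careful optimization of the exponent of $d_k$ over the polytope cut out by the discriminant inequality. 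It is exactly the loss incurred at this stage, absent when one works with $\Gamma^1_{\mathcal O}$, that raises the exponent from $2$ to $18$.
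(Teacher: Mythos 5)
Your overall skeleton (bound the family by the type number of $B$, then bound $d_k$ in terms of $V$ via a lower bound for $\vol(\mathcal H/\Gamma_{\mathcal O})$) is the same as the paper's, but the execution of the volume lower bound has two genuine gaps. First, your treatment of the index $[\Gamma_{\mathcal O}:\Gamma_{\mathcal O}^1]$ points in the wrong direction: you insist that the full class number must not appear and that the index is $2^{m}$ with $m$ bounded linearly in $r_1,r_2,n$. But $m$ involves the $2$-rank of a class-group quotient, which is not $O(n)$ --- it can be as large as $\log_2 h_k$ (e.g.\ when the class group is $2$-elementary) --- so the claimed shape $V\geq c\,d_k^{3/2}/\bigl((4\pi^2)^{r_1}(8\pi^2)^{r_2}2^{m}\bigr)$ with $m=O(n)$ is unjustified. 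The honest bound, which the paper takes from Borel and \cite[Lemma 2.1]{chinburg-smallestorbifold}, is $[\Gamma_{\mathcal O}:\Gamma_{\mathcal O}^1]\leq 2^{n+|\Ram_f(B)|}h_k$; and contrary to your worry, the factor $h_k$ does not swamp the numerator: Lemma \ref{lem:BP} gives $h_k\leq 242\,d_k^{3/4}/(1.64)^{r_1}$, so it merely lowers the exponent from $d_k^{3/2}$ to $d_k^{3/4}$. Accepting the $h_k$ and controlling it with Lemma \ref{lem:BP} is the intended move, not avoiding it.

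Second, once the numerator is only $d_k^{3/4}$, your plan to strip out the dependence on $r_1,r_2,n$ using only the Odlyzko--Poitou inequality $\log(d_k)\geq 4r_1+6r_2-C$ no longer closes. After the cancellations the paper arrives at $V\geq d_k^{3/4}/75^{n}$, and the discriminant bound only gives $n\leq(\log(d_k)+C)/3$, whence $75^{n}$ is of size about $d_k^{(\log 75)/3}\approx d_k^{1.44}\gg d_k^{3/4}$; maximizing over the admissible $(r_1,r_2)$ therefore leaves a \emph{negative} power of $d_k$ and no bound of the form $V\geq c'd_k^{1/18}$. The essential extra ingredient, absent from your proposal, is the inequality $n\leq 3\log(V)$ from \cite[Lemma 4.3]{chinburg-smallestorbifold}, which converts $75^{n}$ into a power of $V$ rather than of $d_k$ and yields $d_k\leq V^{22}$; combined with $(1.22)^{r_1}\leq \exp(n/4)\leq V^{3/4}$ (again via $n\leq 3\log(V)$, not via $(1.22)^{r_1}\leq d_k^{1/4}$ as you propose) this gives $242(1.22)^{r_1}d_k^{3/4}\leq 242\,V^{3/4}V^{33/2}\leq 242V^{18}$.
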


\begin{proof}
If $M$ is an arithmetic orbifold quotient of $\mathcal H$ which has minimal volume in its commensurability class then there exists a number field $k$, quaternion algebra $B$ over $k$ and maximal order $\mathcal O$ of $B$ such that the covering group of $M$ is conjugate to $\Gamma_{\mathcal O}$. If $\mathcal O^\prime$ is another maximal order of $B$ then $\Gamma_{\mathcal O^\prime}$ is conjugate to $\Gamma_{\mathcal O}$ if and only if $\mathcal O$ and $\mathcal O^\prime$ are conjugate in $B$. As in the proof of Theorem \ref{theorem:vignerastheorem} it follows that the maximum number of orbifolds which are all isospectral to $M$ and are mutually non-isometric is bounded above by $242(1.22)^{r_1} d_k^{\frac{3}{4}}$.

We now derive a lower bound for the volume $\vol(\mathcal H/\Gamma_{\mathcal O})$ of $M$. In order to do so we note that by the results of Borel \cite[Sections 8.4 and 8.5]{borel-commensurability} and Chinburg and Friedman \cite[Lemma 2.1]{chinburg-smallestorbifold} we have that $[\Gamma_{\mathcal O}:\Gamma_{\mathcal O}^1]\leq 2^{n+|\Ram_f(B)|}h_k$. From this we deduce that

\begin{equation}\label{equation:minimalvolume}
	V=\vol(\mathcal H/\Gamma_{\mathcal O})\geq \frac{2(4\pi)^s d_k^{3/2} \zeta_k(2) \Phi(\frakD)}{(4\pi^2)^{r_1}(8\pi^2)^{r_2}2^{n+|\Ram_f(B)|}h_k}
\end{equation}

Let $\omega_2(B)$ denote the number of primes of $k$ which have norm $2$ and ramify in $B$. Then $\Phi(\frakD)/2^{|\Ram_f(B)|}\geq (\frac{1}{2})^{\omega_2(B)}$ and $\zeta_k(2)\geq (4/3)^{\omega_2(B)}$ (the latter is an immediate consequence of the Euler product expansion of $\zeta_k(s)$). Using this, Lemma \ref{lem:BP}, the trivial bound $\omega_2(B)\leq n$ and simplifying, we see that

\begin{align}  V & \geq \frac{d_k^{3/4}}{25^{r_1}(8\pi^2)^{r_2}3^{n}} \\ & \geq \frac{d_k^{3/4}}{(75)^{n}} \end{align}

An easy consequence of Lemma 4.3 of \cite{chinburg-smallestorbifold} is that $3\log(V)\geq n$. Equation (4.4) now implies that $d_k\leq V^{22}$. As $1.22<\exp(\frac{1}{4})$, we see that $242(1.22)^{r_1}\leq 242\exp(\frac{n}{4})\leq 242\exp(\frac{\log(V^3)}{4})=242V^{\frac{3}{4}}$. Putting all of this together we see that $242(1.22)^{r_1}d_k^{\frac{3}{4}}\leq 242V^{\frac{3}{4}}V^{\frac{33}{2}}\leq 242 V^{18}$, finishing the proof.\end{proof}

\section{Isospectral maximal arithmetic lattices}\label{section:maximallattices}

In Section \ref{section:minimalvolume} we derived an upper bound for the cardinality of a family of pairwise isospectral non-isometric arithmetic Riemannian orbifolds whose members all have covering groups of the form $\Gamma_{\mathcal O}$. In this section we consider Riemannian orbifolds whose covering groups belong to the broader class of maximal arithmetic lattices in $\PGL_2(\mathbb R)^a\times \PGL_2(\mathbb C)^b$.

\begin{theorem}\label{theorem:maximaltheorem}
Fix a commensurability class $\mathcal C(k,B)$ of cocompact arithmetic lattices in $\PGL_2(\mathbb R)^a\times \PGL_2(\mathbb C)^b$. The number of conjugacy classes of maximal arithmetic lattices with covolume at most $V$ in this commensurability class is less than $242V^{20}$ for all $V>0$.
\end{theorem}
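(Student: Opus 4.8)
The plan is to follow the scheme of Theorem \ref{theorem:minimalvolume}, but now organized around Borel's classification of the maximal lattices in $\mathcal{C}(k,B)$. By the discussion preceding this section, every maximal arithmetic lattice in $\mathcal{C}(k,B)$ is conjugate to a group $\Gamma_{S,\mathcal{O}}$, for $S$ a finite set of primes disjoint from $\Ram_f(B)$ and $\mathcal{O}$ a maximal order. First I would organize the count by the pair $(S,\mathcal{O})$: for a \emph{fixed} admissible $S$, the conjugacy classes of $\Gamma_{S,\mathcal{O}}$ as $\mathcal{O}$ (and the choice of edges) vary are governed, exactly as in the proofs of Theorems \ref{theorem:vignerastheorem} and \ref{theorem:minimalvolume}, by the relevant quotient of the strict class group, and so are at most $2^{r_1}h_k$ in number; by Lemma \ref{lem:BP} this is at most $242(1.22)^{r_1}d_k^{3/4}$, following \cite{Chinburg-Friedman}. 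Hence the total number of conjugacy classes with covolume at most $V$ is bounded by $242(1.22)^{r_1}d_k^{3/4}$ times the number of admissible sets $S$ for which $\vol(\mathcal H/\Gamma_{S,\mathcal{O}})\le V$.

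The second ingredient is the dependence of the covolume on $S$. Borel's covolume computations \cite{borel-commensurability} (see also \cite{Chinburg-Friedman}) give $\vol(\mathcal H/\Gamma_{S,\mathcal{O}}) = V_0\prod_{\frakp\in S}\frac{N(\frakp)+1}{2}$, where $V_0=\vol(\mathcal H/\Gamma_{\mathcal{O}})$ is the minimal covolume in the class and each local factor $\frac{N(\frakp)+1}{2}\ge\tfrac32$ accounts for passing from the vertex stabilizer to an edge stabilizer at $\frakp$. Using the elementary inequality $\frac{N(\frakp)+1}{2}\ge\sqrt{N(\frakp)}$ (equivalent to $(\sqrt{N(\frakp)}-1)^2\ge0$), the constraint $\vol(\mathcal H/\Gamma_{S,\mathcal{O}})\le V$ forces $N(\mathfrak{a}_S)\le (V/V_0)^2$ for the squarefree ideal $\mathfrak{a}_S=\prod_{\frakp\in S}\frakp$. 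Thus the number of admissible $S$ is at most the number of integral ideals of norm at most $(V/V_0)^2$. I expect the efficient way to bound this is not to count ideals crudely but to estimate $\#\{S:\prod_{\frakp\in S}\frac{N(\frakp)+1}{2}\le V/V_0\}$ directly by a Rankin-type inequality, namely $\le (V/V_0)^{\sigma}\prod_{\frakp}\bigl(1+(2/(N(\frakp)+1))^{\sigma}\bigr)$ for a well-chosen $\sigma>1$, whose Euler product is controlled by $\zeta(\sigma)^{\beta n}$ for an explicit $\beta$.

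The final step is the bookkeeping that converts everything into a single power of $V$ with the uniform constant $242$. Here I would reuse the relations established in Theorem \ref{theorem:minimalvolume}: since the class contains a lattice of covolume at most $V$ we have $3\log V\ge n$, hence $(1.22)^{r_1}\le\exp(n/4)\le V^{3/4}$, while the Odlyzko--Poitou and Chinburg--Friedman estimates give the (sharp, not loose) bound $d_k^{3/4}\le V^{\alpha}$ with $\alpha$ close to $14$; together these make the type-number factor $242(1.22)^{r_1}d_k^{3/4}$ at most $242\,V^{\,\approx 15}$. The factors $(V/V_0)^{\sigma}$ and $\zeta(\sigma)^{\beta n}\le V^{3\beta\log\zeta(\sigma)}$ coming from the $S$-count likewise become $V^{O(1)}$, and choosing $\sigma$ to balance them keeps the $S$-count below $V^{\,\approx 5}$, so that the product stays below $242V^{20}$ for all $V>0$.

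The main obstacle is precisely this last balancing act: one must convert the \emph{multiplicative} constraint $\prod_{\frakp\in S}\frac{N(\frakp)+1}{2}\le V/V_0$ into a polynomial-in-$V$ count carrying a \emph{uniform} constant, while simultaneously absorbing all of the $k$-dependence (through $d_k$, $h_k$, $r_1$, and $n$) into powers of $V$ via the discriminant--volume relations. The delicate point is that naively summing the per-$S$ type-number bound over all admissible $S$ must be shown to stay within exponent $20$; this forces one to use the \emph{sharp} discriminant bound on $d_k$ rather than the loose estimate $d_k\le V^{22}$ of Theorem \ref{theorem:minimalvolume}, together with the efficient Rankin count of $S$ rather than a crude ideal-counting bound, since the cruder route overshoots the target exponent.
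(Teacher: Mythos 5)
Your proposal follows essentially the same route as the paper: Borel's classification reduces the count to (type number of $B$) $\times$ (number of admissible $S$), the type number is bounded by $242(1.22)^{r_1}d_k^{3/4}$ via Lemma \ref{lem:BP}, the Maclachlan--Reid index formula turns the covolume constraint into $\prod_{\frakp\in S}\frac{N(\frakp)+1}{2}\le V/V_0$, and the discriminant--volume relations from Theorem \ref{theorem:minimalvolume} absorb everything into powers of $V$. Your tactical variations (using $\frac{N(\frakp)+1}{2}\ge\sqrt{N(\frakp)}$ rather than $N(\frakp)^{1/3}$, and a Rankin-type count of the sets $S$ rather than reducing to counting integral ideals) are fine and would work if the Euler product is estimated carefully.

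One correction to your closing diagnosis: the ``crude'' route you dismiss is exactly the paper's route, and it does \emph{not} overshoot exponent $20$. The reason is a cancellation you overlooked: the constraint is $\prod_{\frakp\in S, N(\frakp)\ne 2}N(\frakp)\le V^3/d_k^{3/22}$ (since $V_0\ge d_k^{1/22}$), so the ideal-counting bound of \cite[Lemma 3.4]{BGLS} gives at most $(\pi^2/6)^n V^6 d_k^{-3/11}$ choices for $S$; the factor $d_k^{-3/11}$ cancels part of the $d_k^{3/4}$ from the type number, leaving $d_k^{21/44}\le V_0^{21/2}$, and the total $242\,V_0^{3}V^6V_0^{21/2}\le 242V^{19.5}$ lands comfortably under $242V^{20}$ even with the loose estimate $d_k\le V_0^{22}$. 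So neither the sharp discriminant bound nor the refined Rankin count is forced; keeping $V_0$ (equivalently, a negative power of $d_k$) explicit in the $S$-count is what does the work.
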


\begin{proof}
Let $S$ be a finite set of primes of $k$ which are disjoint from $\Ram_f(B)$ and let $\mathcal O$ be a maximal order of $B$. Then $\Gamma_{S,\mathcal O}$ is an arithmetic lattice in $\mathcal C(k,B)$ and, while $\Gamma_{S,\mathcal O}$ may not be maximal, every maximal arithmetic lattices in $\mathcal C(k,B)$ is of this form. If $\mathcal O^\prime$ is another maximal order of $B$ then $\Gamma_{S,\mathcal O}$ is conjugate to $\Gamma_{S,\mathcal O^\prime}$ if and only if $\mathcal O$ and $\mathcal O^\prime$ are conjugate in $B$. It follows that the number of conjugacy classes of maximal arithmetic lattices of volume at most $V$ in $\mathcal C(k,B)$ is less than the type number of $B$ times the number of choices for the set $S$. We have already seen that the type number of $B$ is less than $242(1.22)^{r_1}d_k^{\frac{3}{4}}.$

Before proceeding we require a definition. Let $\Gamma_1,\Gamma_2\in \mathcal C(k,B)$. We define the \textit{generalized index} $[\Gamma_1:\Gamma_2]\in\mathbb Q$ to be the quotient $\frac{[\Gamma_1:\Gamma_1\cap\Gamma_2]}{[\Gamma_2:\Gamma_1\cap\Gamma_2]}$.

Maclachlan and Reid \cite[Theorem 11.5.1]{mac-reid-book} have shown that there exists an integer $0\leq m \leq |S| $ such that $[\Gamma_{\mathcal O}:\Gamma_{S,\mathcal O}]=2^{-m}\prod_{\mathfrak p\in S}\left(N(\mathfrak p)+1\right)$. Set $\mathcal H=\mathbb H_2^a\times \mathbb H_3^b$ so that 

\begin{align*}
\frac{V}{\vol(\mathcal H/\Gamma_{\mathcal O})} & \geq \frac{\vol(\mathcal H/\Gamma_{S,\mathcal O})}{\vol(\mathcal H/\Gamma_{\mathcal O})} \\
& \geq \prod_{\mathfrak p\in S} \frac{\left(N(\mathfrak p)+1\right)}{2} \\
& \geq \prod_{\substack{\mathfrak p\in S \\ N(\mathfrak p)\neq 2}} N(\mathfrak p)^{\frac{1}{3}}.
\end{align*}

In the proof of Theorem \ref{theorem:minimalvolume} we proved that $\vol(\mathcal H/\Gamma_{\mathcal O})\geq d_k^{\frac{1}{22}}$, hence the above inequalities show that \begin{equation}\label{equation:Sbound}
\frac{V^3}{d_k^{\frac{3}{22}}}\geq \prod_{\substack{\mathfrak p\in S \\ N(\mathfrak p)\neq 2}} N(\mathfrak p).
\end{equation}
Equation (\ref{equation:Sbound}) shows that the number of choices for the set $S$ is less than the number of integral ideals of norm less than $\frac{V^3}{d_k^{\frac{3}{22}}}$, which by \cite[Lemma 3.4]{BGLS} is less than $\left(\frac{\pi^2}{6}\right)^n \frac{V^6}{d_k^{\frac{3}{11}}}$. Therefore the number of conjugacy classes of maximal arithmetic lattices of volume at most $V$ in $\mathcal C(k,B)$ is less than $242(1.22)^{r_1}\left(\frac{\pi^2}{6}\right)^n d_k^{\frac{21}{44}}V^6$. 

To ease notation let $V^\prime=\vol(\mathcal H/\Gamma_{\mathcal O})$. Using the fact that $3\log(V^\prime)\geq n$ (see \cite[Lemma 4.3]{chinburg-smallestorbifold}) we deduce that the number of conjugacy classes of maximal arithmetic lattices of volume at most $V$ in $\mathcal C(k,B)$ is less than $242{V^\prime}^3V^6d_k^{\frac{21}{44}}$. As we have seen that $d_k^{\frac{1}{22}}\leq V^\prime$ and trivially have $V^{\prime}\leq V$, we deduce that $d_k^{\frac{21}{44}}\leq {V^\prime}^{\frac{21}{2}}$, hence $242{V^\prime}^3V^6d_k^{\frac{21}{44}}\leq 242{V^\prime}^{\frac{27}{2}}V^6\leq 242 V^{20}$. \end{proof}

Because isospectral Riemannian orbifolds must have the same volume, the following is an immediate consequence of Theorem \ref{theorem:maximaltheorem}.

\begin{cor}\label{cor:isospectralcor}
The cardinality of a family of commensurable, pairwise isospectral non-isometric arithmetic Riemannian orbifold quotients of $\mathcal H=\mathbb H_2^a\times \mathbb H_3^b$ with covering groups which are maximal in their commensurability class and have volume $V$ is at most $242 V^{20}$ for all $V>0$.
\end{cor}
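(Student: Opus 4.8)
The plan is to lean on Borel's structural description of maximal arithmetic lattices: every maximal arithmetic lattice in $\mathcal C(k,B)$ is conjugate to a group of the form $\Gamma_{S,\mathcal O}$, where $\mathcal O$ is a maximal order of $B$ and $S$ is a finite set of primes of $k$ disjoint from $\Ram_f(B)$. Since $\Gamma_{S,\mathcal O}$ and $\Gamma_{S,\mathcal O'}$ are conjugate precisely when $\mathcal O$ and $\mathcal O'$ are conjugate in $B$, the number of conjugacy classes of maximal arithmetic lattices of covolume at most $V$ is bounded by the product of two quantities: the type number of $B$, and the number of admissible sets $S$ keeping the covolume below $V$. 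First I would reuse the bound on the type number already established in the proof of Theorem \ref{theorem:vignerastheorem}, namely that it is at most $242(1.22)^{r_1}d_k^{3/4}$.

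The genuinely new ingredient is bounding the number of choices of $S$. For this I would invoke the index formula of Maclachlan and Reid, which supplies an integer $0\le m\le |S|$ with $[\Gamma_{\mathcal O}:\Gamma_{S,\mathcal O}]=2^{-m}\prod_{\mathfrak p\in S}(N(\mathfrak p)+1)$; as this is a generalized index of lattices in one commensurability class, it equals the ratio of covolumes, so enlarging $S$ inflates the covolume. The requirement $\vol(\mathcal H/\Gamma_{S,\mathcal O})\le V$ then forces $\prod_{\mathfrak p\in S}\tfrac{N(\mathfrak p)+1}{2}\le V/\vol(\mathcal H/\Gamma_{\mathcal O})$. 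Using the elementary inequality $\tfrac{N(\mathfrak p)+1}{2}\ge N(\mathfrak p)^{1/3}$ for $N(\mathfrak p)\ne 2$, together with the minimal-volume lower bound $\vol(\mathcal H/\Gamma_{\mathcal O})\ge d_k^{1/22}$ obtained in the proof of Theorem \ref{theorem:minimalvolume}, I would deduce $\prod_{\mathfrak p\in S,\ N(\mathfrak p)\ne 2}N(\mathfrak p)\le V^3/d_k^{3/22}$. Each admissible $S$ thus corresponds to a squarefree integral ideal of norm below this quantity, so the number of choices of $S$ is at most the number of integral ideals of norm less than $V^3/d_k^{3/22}$, which by the counting lemma of \cite{BGLS} is at most $(\pi^2/6)^n V^6/d_k^{3/11}$.

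I expect the main obstacle to be the final bookkeeping that converts the resulting estimate, which carries a factor exponential in $n=[k:\mathbb Q]$ (arising from both $(1.22)^{r_1}$ and $(\pi^2/6)^n$) and a positive power of $d_k$, into a quantity polynomial in $V$ alone. The decisive leverage is the Chinburg--Friedman inequality $3\log\vol(\mathcal H/\Gamma_{\mathcal O})\ge n$ from \cite[Lemma 4.3]{chinburg-smallestorbifold}, which lets me absorb the exponential-in-$n$ factors: since $1.22\cdot\pi^2/6<e$, the product $(1.22)^{r_1}(\pi^2/6)^n$ is dominated by $\vol(\mathcal H/\Gamma_{\mathcal O})^3$. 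Writing $V'=\vol(\mathcal H/\Gamma_{\mathcal O})$, the multiplied bound becomes at most $242\,V'^3V^6 d_k^{21/44}$. I would then eliminate $d_k$ via $d_k^{1/22}\le V'$, so that $d_k^{21/44}\le V'^{21/2}$, and finally replace $V'$ by the larger $V$, arriving at $242\,V'^{27/2}V^6\le 242\,V^{39/2}\le 242\,V^{20}$. The delicate point throughout is keeping every exponent under tight control so that the accumulated power of $V$ stays at or below $20$, and checking that the crude discarding of the norm-$2$ primes does not perturb the final estimate.
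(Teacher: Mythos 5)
Your proposal is correct and is essentially the paper's own argument: the paper deduces this corollary in one line from Theorem \ref{theorem:maximaltheorem} (isospectral orbifolds share the same volume, so the family injects into the set of conjugacy classes of maximal lattices of covolume at most $V$), and your write-up simply inlines the proof of that theorem, reproducing its steps — type number bound, the Maclachlan--Reid index formula, the ideal-counting lemma from \cite{BGLS}, and the Chinburg--Friedman inequality — with the same constants and exponents. No gaps.
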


\begin{remark}
Reid \cite{Reid} has shown that isospectral arithmetic hyperbolic $2$- and $3$-orbifolds are always commensurable, hence when $a+b=1$ the commensurability assumption in Theorem \ref{theorem:minimalvolume} and Corollary \ref{cor:isospectralcor} can be omitted.
\end{remark}

\end{document}